\def\qed{\hfill {\hbox{${\vcenter{\vbox{               
   \hrule height 0.4pt\hbox{\vrule width 0.4pt height 6pt
   \kern5pt\vrule width 0.4pt}\hrule height 0.4pt}}}$}}}
\def\tr{\ast}
\newtheorem{theorem}{Theorem}
\newtheorem{proposition}[theorem]{Proposition}
\theoremstyle{definition}
\newtheorem{example}{Example}
\newtheorem{definition}{Definition}
\newtheorem{remark}{Remark}
\date{}
\title{\Large \textbf{Marked Graph Mosaics}} 
\author{
Seonmi Choi\footnote{Email: smchoi@knu.ac.kr. Partially supported by Basic Science Research Program through the National Research Foundation of Korea(NRF) funded by the Ministry of Education(2021R1I1A1A01049100) and the National Research Foundation of Korea (NRF) grant funded by the Korean government (MSIT) (No. 2022R1A5A1033624). }
 \and
Sam Nelson\footnote{Email: Sam.Nelson@cmc.edu. Partially supported by
Simons Foundation Collaboration Grant 702597.}}
\begin{document}
\maketitle

\begin{abstract}
We consider the notion of mosaic diagrams for surface-links using marked
graph diagrams. We establish bounds, in some cases tight, on the mosaic
numbers for the surface-links with ch-index up to 10. As an application,
we use mosaic diagrams to enhance the kei counting invariant for unoriented 
surface-links as well as classical knots and links.
\end{abstract}

\parbox{6in} {\textsc{Keywords:} Mosaic knots, Surface-links, Marked graph diagrams, kei homset enhancements

\smallskip

\textsc{2020 MSC:} 57K12}

\section{\large\textbf{Introduction}}\label{I}

\textit{Surface-links} are compact surfaces smoothly embedded in $\mathbb{R}^4$ or $S^4$, i.e. surfaces which are knotted and linked in 4-space. Surface-links include many more distinct topological types of unknotted objects -- spheres, tori, projective planes, Klein bottles, etc. -- compared with classical knots, and additionally include both orientable and non-orientable cases.

Introduced in \cite{Lomonaco}, \textit{marked graph diagrams} are knot diagrams
with \textit{marked vertices} representing saddle points of a \textit{surface-link}.
A marked graph diagram satisfying certain mild conditions determines
a surface-link up to ambient isotopy in $\mathbb{R}^4$, and marked graph
diagrams together with the \textit{Yoshikawa moves} provide a convenient 
diagrammatic calculus for combinatorial computation with surface-links. Moreover, marked graph diagrams and their Yoshikawa equivalence classes provide a diagrammatic way to represent cobordisms between classical knots and links.

A \textit{mosaic diagram} for a classical knot $K$ is a rectangular (usually 
square)
arrangement of square tiles containing crossings, arcs or nothing 
such that the arcs join to form a diagram of $K$. Mosaics were used in 
\cite{LK} to define \textit{quantum knots}, elements of Hilbert spaces 
generated by mosaic diagrams.

In this paper we take the first steps toward extending these constructions to 
the case of surface-links by considering mosaic presentations 
for surface-links using marked graph diagrams. We establish a set of tiles 
and Yoshikawa moves
for marked graph mosaics and provide mosaic diagrams for each of the 
surface-links in the Yoshikawa table of surface-links with up to ch-index 10,
establishing an upper bound on mosaic number for these surface-links.
As an application we use
mosaic presentations to define a new enhancement of the kei counting invariant
for classical knots and links as well as for surface-links. As with mosaic 
number, we can compute an upper bound with respect to a certain
ordering on the new enhancement from a given diagram of a surface-link or
classical knot or link.

The paper is organized as follows. In Section \ref{P} we review some 
preliminaries about knot mosaics and marked graph diagrams. In Section 
\ref{MGM} we introduce marked graph mosaics and obtain some results including
upper bounds, some tight, on the the mosaic numbers of both orientable and 
non-orientable surface-links with ch-index less than or equal to 10. 
In Section \ref{K} we define kei-colored mosaics and use them to enhance the
kei counting invariant for classical knots and links as well as surface-links.
We conclude in Section \ref{Q} with some questions for future research.

\section{\large\textbf{Preliminaries}}\label{P}

We review knot mosaics and recall surface-links, marked graph diagrams and their relationships. 

\subsection{Surface-links and marked graph diagrams}

A \textit{surface-link} is the image of a closed surface smoothly (piecewise 
linear and locally flatly) embedded in $\mathbb{R}^4$ (or $S^4$). If it is 
called a \textit{surface-knot}, then the underlying surface is connected. 
A surface-link is \textit{orientable} if the underlying surface is orientable; 
otherwise, it is \textit{nonorientable} or \textit{unorientable}. 
An \textit{unoriented} surface-link is either an unorientable surface-link or
an orientable surface link without a specified orientation.
Two surface-links $F$ and $F'$ are \textit{equivalent} if there exists an 
orientation-preserving homeomorphism 
$h : \mathbb{R}^4 \rightarrow \mathbb{R}^4$ such that $h(F)=F'$.
There are many useful schemes for describing for surface-links since it is 
difficult to directly deal with surface-links in 4-space for research. For 
example, broken surface diagrams, marked graph diagrams, motion pictures etc. 
See \cite{CarterKamadaSaito,Kamadabook,Kamadabook2,Yoshikawa} for more 
information. 

We use an effective tool for handling surface-links known as a \textit{marked 
graph diagram}. A \textit{marked graph} is a spatial graph embedded in 
$\mathbb{R}^3$ possibly with $4$-valent vertices decorated by a line segment 
like \scalerel*{\includegraphics{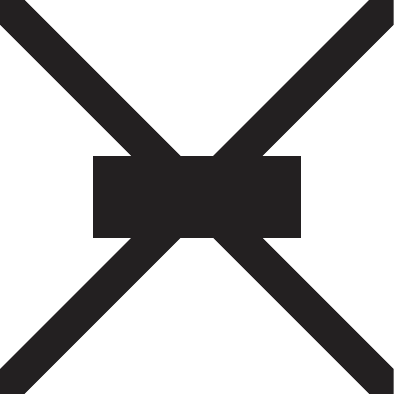}}{B}.
We call such a line segment a \textit{marker} and a vertex with a marker a 
\textit{marked vertex}.

An orientation of edges incident with a marked vertex is one of two types of 
the orientation, such as \scalerel*{\includegraphics{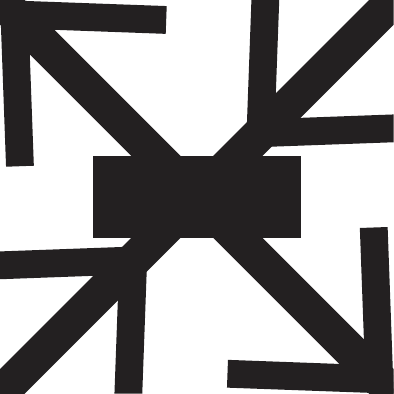}}{B} 
or \scalerel*{\includegraphics{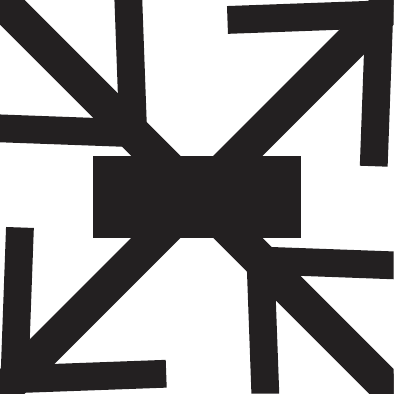}}{B}.
A marked graph is said to be \textit{orientable} if it admits an orientation.
Otherwise, it is \textit{non-orientable}. 
Two (oriented) marked graphs are said to be \textit{equivalent} if they are 
ambient isotopic in $\mathbb{R}^3$ keeping the rectangular neighborhoods 
and markers (with orientation).
In the same way as a link diagram, one can define a \textit{marked graph 
diagram} which is a diagram in $\mathbb{R}^2$ with classical crossings and 
marked vertices.

For each marked vertex \scalerel*{\includegraphics{MarkedVertex1.pdf}}{B} of 
a marked graph diagram $D$, the local diagram obtained by splicing in a 
direction consistent with its marker (say $+$ direction), looks like 
\scalerel*{\includegraphics{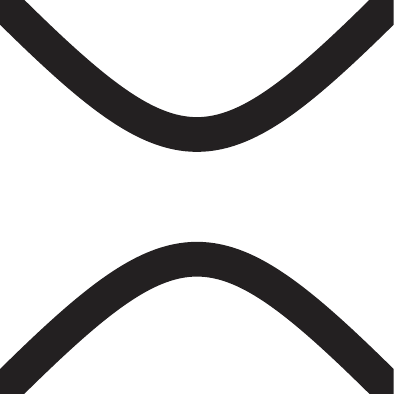}}{B}. 
By applying this in the opposite direction (called $-$ direction), the resulting local diagram looks like \scalerel*{\includegraphics{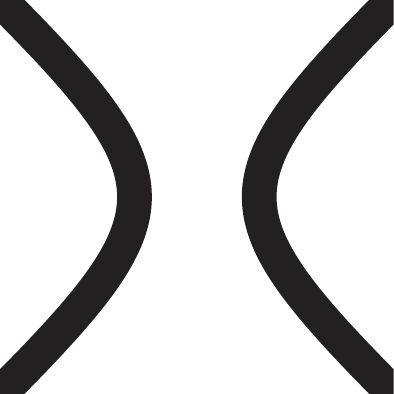}}{B}. 
Therefore one can obtain two classical link diagrams, denoted by $L_{+}(D)$ and $L_{-}(D)$, from $D$ by splicing every marked vertices in $+$ direction and $-$ direction, respectively. 
We call $L_{+}(D)$ and $L_{-}(D)$ the \textit{positive} and \textit{negative resolutions} of $D$, respectively. 


A marked graph diagram $D$ is said to be \textit{admissible} if both resolutions $L_{-}(D)$ and $L_{+}(D)$ are trivial. A marked graph is said to be \textit{admissible} if it has an admissible marked graph diagram.
For example, it is easy to check that a marked graph diagram $D$ of the spun trefoil as follows is admissible.
\[\scalebox{0.3}{\includegraphics{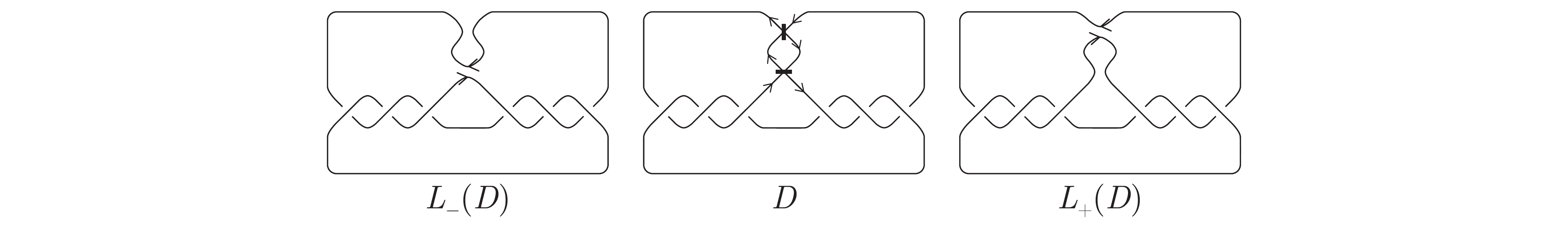}}\]

Let $D$ be a admissible marked graph diagram. Then
a surface-link $F(D)$ can be constructed and it is uniquely determined from $D$ 
up to equivalence. 
Conversely, every surface-link $F$ can be expressed by an admissible marked graph diagram $D$, that is, $F(D)$ is equivalent to $F$.
See \cite{KawauchiShibuyaSuzuki,Lomonaco,Yoshikawa} for more details. 
For example, the correspondence between the marked graph diagram and the standard projective plane are illustrated in the following figure.
\[\scalebox{0.3}{\includegraphics{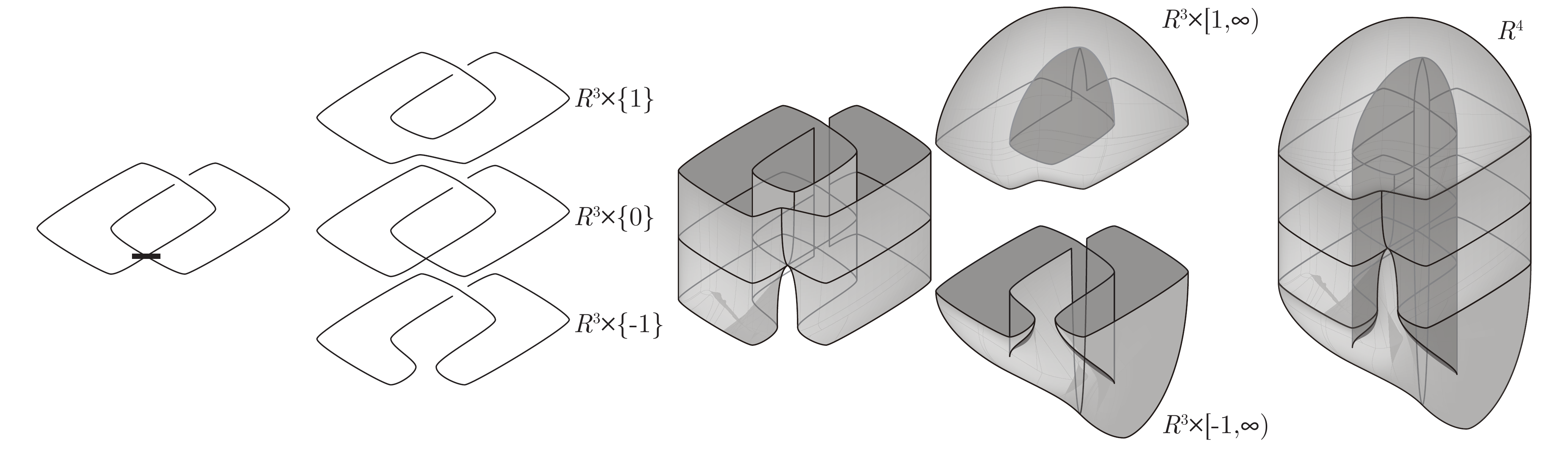}}\]

The equivalence moves $\Gamma_{1}, \cdots, \Gamma_{8}$ for marked graph diagrams is called \textit{ Yoshikawa moves} \cite{Yoshikawa}.
\[\scalebox{0.2}{\includegraphics{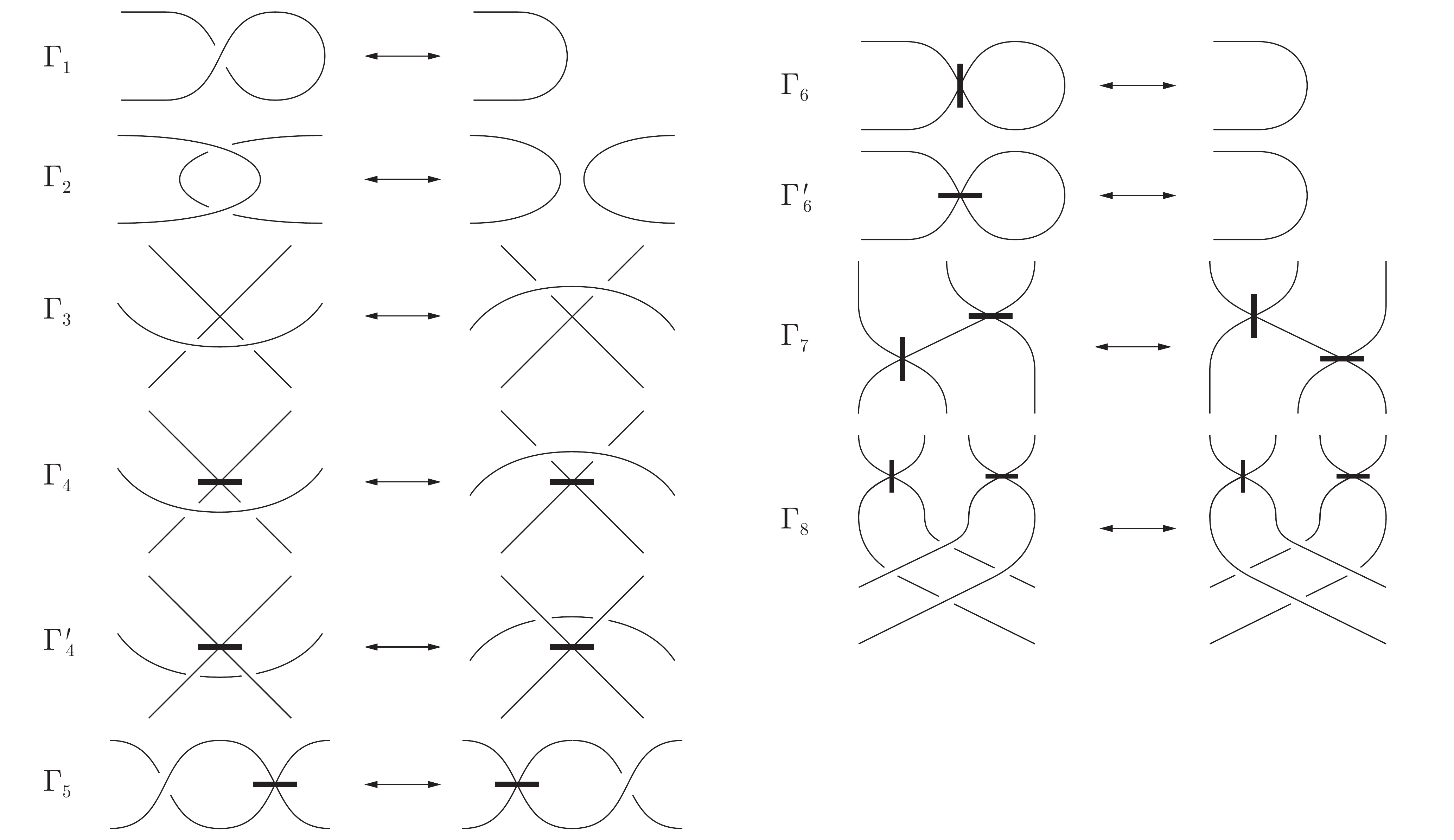}}\]


\begin{proposition}[\cite{KeartonKurlin,Swenton,Yoshikawa}]
Two marked graph diagrams $D$ and $D'$ present equivalent oriented surface-links if and only if $D$ can be obtained from $D'$ by a finite sequence of ambient isotopies in $\mathbb{R}^{2}$ and Yoshikawa moves.
\end{proposition}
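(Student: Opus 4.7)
The plan is to prove the two directions separately. The forward direction (Yoshikawa-equivalent diagrams present equivalent surface-links) is a move-by-move check. The reverse direction (equivalent surface-links have Yoshikawa-equivalent marked graph diagrams) is the substantive part and requires putting the surface-links in a normal form whose cross-sections recover the marked graph diagrams, then analyzing how these cross-sections change along a generic ambient isotopy.

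For the forward direction, I would exhibit for each of the moves $\Gamma_1,\dots,\Gamma_8$ an explicit ambient isotopy of the associated surface-link in $\mathbb{R}^4$. The moves $\Gamma_1$–$\Gamma_5$ are the classical Reidemeister-type moves that only modify ordinary crossings away from marked vertices; these correspond to isotopies in a single time slice of a motion picture, and so extend trivially to the surface-link. The moves involving marked vertices ($\Gamma_6,\Gamma_7,\Gamma_8$) must be realized by exhibiting explicit motion pictures or broken surface diagrams whose ``before'' and ``after'' pictures yield ambient isotopic surfaces in $\mathbb{R}^4$. This is routine but involves drawing several pictures.

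For the reverse direction I would use the hyperbolic splitting technique. Any surface-link is ambient isotopic to one whose height function (the last coordinate of $\mathbb{R}^4$) has all minima at $t<0$, all saddles on the level $t=0$, and all maxima at $t>0$; the cross-section at $t=0$ is a 4-valent spatial graph whose markers at the saddle vertices are recorded by the local model of the saddle, and the admissibility condition is exactly that $L_\pm(D)$ bound collections of trivial disks giving the minima and maxima. Given an ambient isotopy $F_s$ between $F(D)$ and $F(D')$, I would perturb it to a one-parameter family for which hyperbolic splitting holds at all but finitely many $s$, then apply Cerf-type arguments to classify the local transitions at the exceptional times. Each generic transition (a critical-value exchange, the birth or death of a cancelling critical pair, a saddle sliding past an existing crossing, etc.) is to be identified with one of the Yoshikawa moves $\Gamma_1$–$\Gamma_8$ applied to the cross-section at the saddle level, up to planar isotopy.

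The main obstacle is precisely this classification: one must verify that the finite list $\Gamma_1$–$\Gamma_8$ suffices, i.e.\ that every generic codimension-one transition of the hyperbolically split presentation is realized by one of the listed moves, with no additional moves needed. This is essentially the content of the proofs of Kearton–Kurlin and Swenton, which refine Yoshikawa's original list of generators. A secondary technical point is ensuring that the perturbation of $F_s$ can be made to preserve admissibility at all intermediate times, so that the cross-section at $t=0$ is always an honest marked graph diagram presenting a surface-link rather than some degenerate configuration.
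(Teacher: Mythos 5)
This proposition is not proved in the paper at all: it is quoted from the literature, with the proof attributed to Kearton--Kurlin, Swenton, and Yoshikawa. So there is no internal argument to compare against; your proposal can only be judged as a standalone sketch. As such, it correctly reproduces the architecture of the known proofs (realize each $\Gamma_i$ by an explicit isotopy for one direction; hyperbolic splitting plus a Cerf-theoretic analysis of generic transitions for the other), but it is a plan rather than a proof. The entire mathematical content of the reverse direction is the step you label ``the main obstacle'': showing that every codimension-one transition of a hyperbolically split family is realized by one of $\Gamma_1$--$\Gamma_8$ and that no further moves are needed. You do not carry this out; you defer it to Kearton--Kurlin and Swenton. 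Since that is precisely the theorem being proved, the proposal as written establishes nothing beyond what the citation already does. (This is worth being explicit about historically: Yoshikawa proposed the moves, and completeness was only established later in the two cited papers, by quite different methods --- Swenton via a calculus for singular levels of embedded surfaces, Kearton--Kurlin via a universal polyhedron argument.)

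There is also one concrete error in the forward direction. You group $\Gamma_1$--$\Gamma_5$ as ``classical Reidemeister-type moves that only modify ordinary crossings away from marked vertices.'' In Yoshikawa's list only $\Gamma_1,\Gamma_2,\Gamma_3$ are the Reidemeister moves; $\Gamma_4$ (passing a strand over or under a marked vertex) and $\Gamma_5$ (rotating a marker) do involve marked vertices. They are still realized within a single still of the motion picture, since they are ambient isotopies of the marked graph in $\mathbb{R}^3$, so your conclusion for them survives, but the justification you give does not apply to them as stated. A careful write-up would separate the moves into those realized by isotopy of the level graph ($\Gamma_1$--$\Gamma_5$) and those that genuinely alter the saddle data ($\Gamma_6,\Gamma_7,\Gamma_8$), and would also address the orientation conventions, since the proposition as stated concerns oriented surface-links and the oriented versions of the moves carry compatibility constraints at the marked vertices.
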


\begin{definition}
Let $K$ be a marked graph diagram. The \textit{ch-index} of $K$, denoted 
$\mathrm{ch}(K)$, is the total number of crossings and marked vertices in $K$.
\end{definition}

\subsection{\large\textbf{Mosaic Knots}}\label{MK}

A \textit{mosaic (unoriented) tile} is one of rectangles with arcs and possibly with one crossing, depicted as follows.
\[\includegraphics{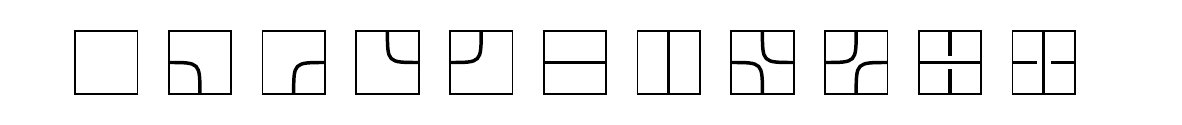}\]
The set of mosaic tiles $T_{0}, T_{1}, \cdots, T_{10}$ is denoted by $\mathbb{T}^{(u)}$ and
there are exactly $5$ tiles, up to rotation.
The endpoints of an arc on a mosaic tile are called \textit{connection points} of the tile and are also located the center of an edge. 
There are tiles with $0$, $2$ and $4$ connection points in $\mathbb{T}^{(u)}$. 
\[\includegraphics{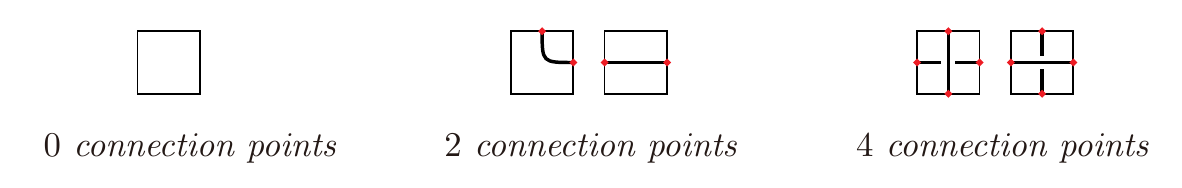}\]

An \textit{$(m, n)$-mosaic} is an $m\times n$ matrix whose entries are mosaic tiles in $\mathbb{T}^{(u)}$.
If $m=n$, then it is simply called an \textit{$n$-mosaic}.
The sets of $(m, n)$-mosaics and $n$-mosaics are denoted by $\mathbb{M}^{(m, n)}$ and $\mathbb{M}^{(n)}$, respectively.
\[\includegraphics{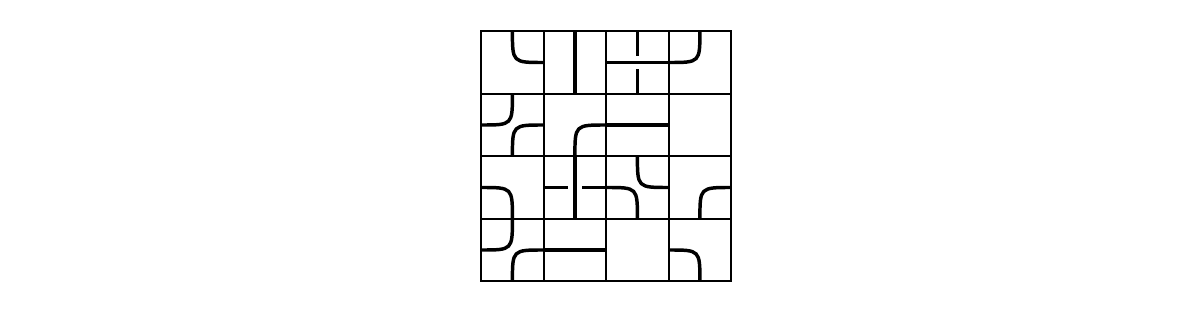}\]
Two tiles in a mosaic are said to be \textit{contiguous} if they lie immediately next to each other in the same either row or column.
A tile in a mosaic is said to be \textit{suitably connected} if all its connection points touch the connection points of contiguous tiles. 
all its connection points meet the connection points of contiguous tiles.
Note that for $4$-mosaic illustrated above, its $(2, 2)$-entry tile is suitably connected, but its $(3,3)$-entry tile is not suitably connected.

\begin{definition}
A \textit{knot $(m, n)$-mosaic} is an $(m, n)$-mosaic in which all tiles are suitably connected. The set of all knot $(m, n)$-mosaic is the subset of $\mathbb{M}^{(m, n)}$, denoted by $\mathbb{K}^{(m, n)}$. 
If $m=n$, then it is called a \textit{knot $n$-mosaic} and its set is denoted by $\mathbb{K}^{(n)}$.
\end{definition}

\begin{example}
The trefoil $3_{1}$ has a knot $5$-mosaic and $4$-mosaic, as follows.
\[\includegraphics{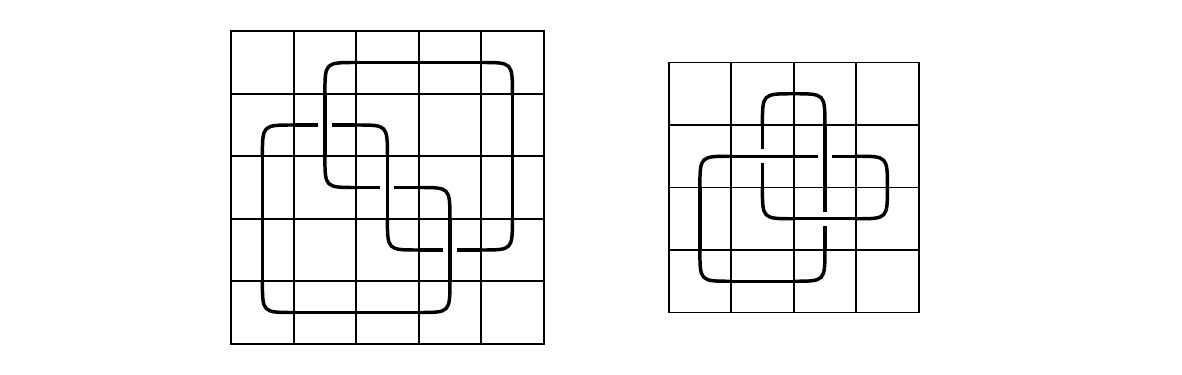}\]
\end{example}

For the equivalence for mosaic knots, there are planar isotopy moves and Reidemeister moves by using mosaic tiles. The non-deterministic tiles are necessary to define the moves, as follows :
\[\includegraphics{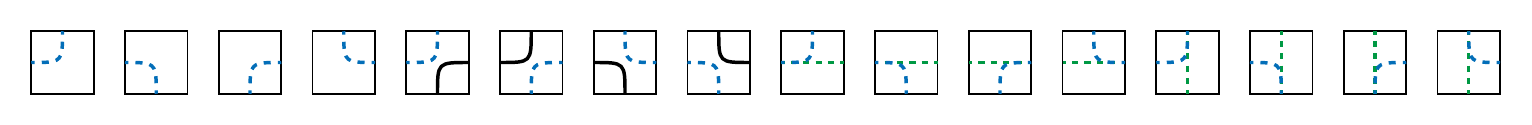}\]
Each non-deterministic tile means two types of tiles. 
\[\includegraphics{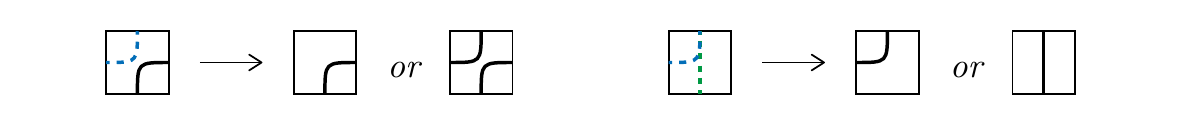}\]
Non-deterministic tiles labeled by the same letter $A$ or $B$ are synchronized.
\[\includegraphics{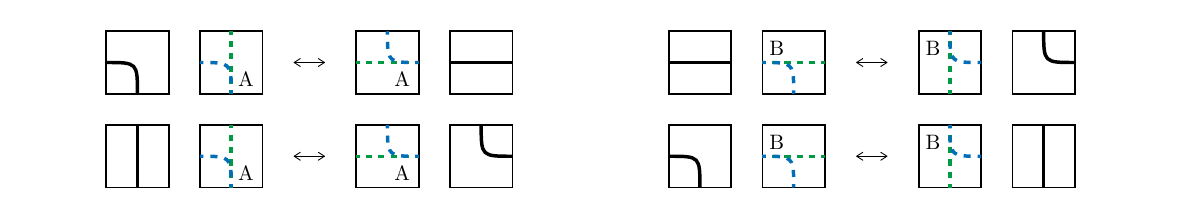}\]

The equivalence of mosaic knots consists of 11 moves for planar isotopy, 2 moves for Reidemeister moves I, 4 moves for Reidemeister moves II and 6 moves for Reidemeister moves III.
\begin{itemize}
  \item[0.] Planar isotopy moves : $11$ types
  \[\scalebox{0.8}{\includegraphics{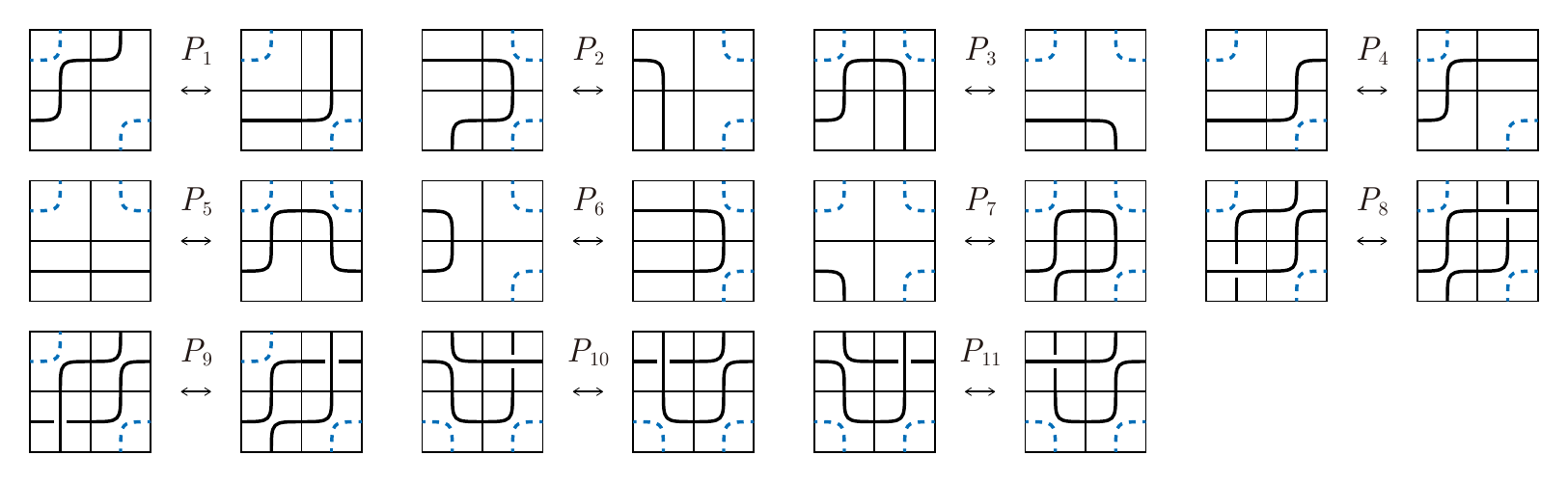}}\]
  \item[1.] Reidemeister moves I : $2$ types
  \[\scalebox{0.8}{\includegraphics{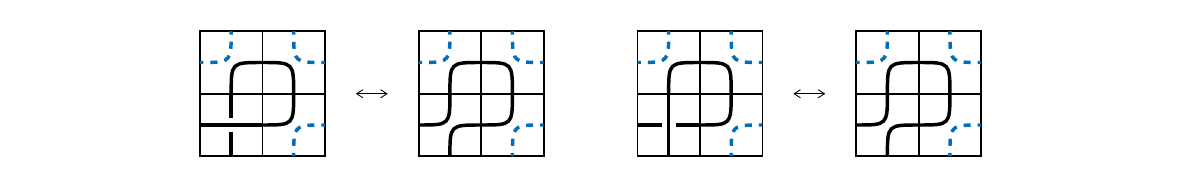}}\]
  \item[2.] Reidemeister moves II : $4$ types
  \[\scalebox{0.8}{\includegraphics{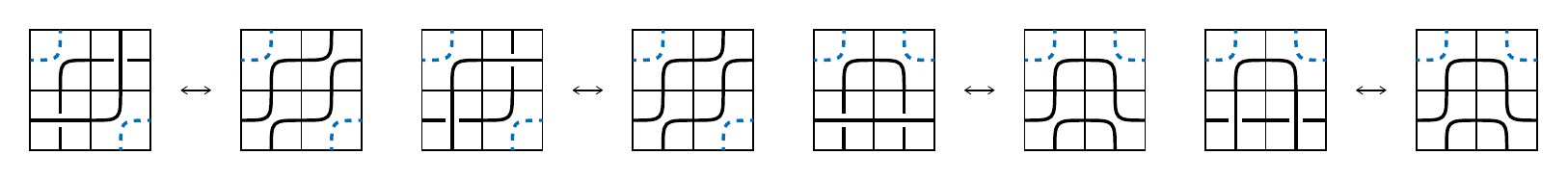}}\]
  \item[3.] Reidemeister moves III : $6$ types
  \[\scalebox{0.8}{\includegraphics{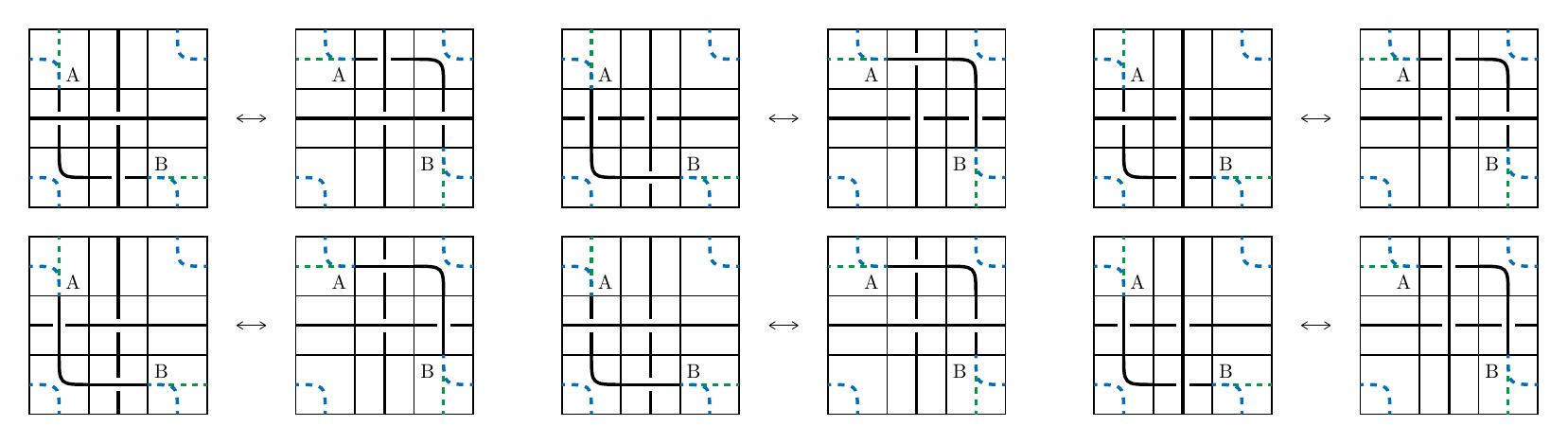}}\]
\end{itemize}

All mosaic moves are permutations on the set $\mathbb{M}^{(n)}$ of $n$-mosaics. Indeed, they are also in the group of all permutations of the set $\mathbb{K}^{(n)}$ of knot $n$-mosaics.

\begin{definition}
The \textit{ambient isotopy group $\mathbb{A}(n)$} is the subgroup of the group of all permutations of the set $\mathbb{K}^{(n)}$ generated by all planar isotopy moves and all Reidemeister moves.  
\end{definition}

Two $n$-mosaics $M$ and $M'$ are said to be \textit{of the same knot $n$-type}, denoted by 
  $M \overset{n}{\sim} M',$
  if there exists an element of $\mathbb{A}(n)$ such that it transforms $M$ into $M'$.
Two $n$-mosaics $M$ and $M'$ are said to be \textit{of the same knot type} 
  if there exists a non-negative integer $k$ such that 
  $$i^{k}M \overset{n+k}{\sim} i^{k}M',$$
  where $i : \mathbb{M}^{(j)} \rightarrow \mathbb{M}^{(j+1)}$ is the mosaic injection by adding a row and a column consisting of only empty tiles.
  
In \cite{LK}, Lomonaco and Kauffman conjectured that tame knot theory is equivalent to knot mosaic theory and in \cite{KS}, Kuriya and Shehab proved the conjecture. 

\begin{proposition}
Let $K$ and $K'$ be two knot mosaics of two tame knots $k$ and $k'$, respectively. 
Then $K$ and $K'$ are of the same knot mosaic type if and only if $k$ and $k'$ are of the same knot type.
\end{proposition}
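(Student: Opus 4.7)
The plan is to handle the two directions separately. The forward direction — that $K\stackrel{n}{\sim}K'$ (or its stabilized version under the injection $i$) implies that the underlying tame knots $k$ and $k'$ are ambient isotopic — is by inspection of the generators of $\mathbb{A}(n)$. Each of the $11$ planar isotopy moves, the $2$ Reidemeister I moves, the $4$ Reidemeister II moves, and the $6$ Reidemeister III moves, when read off the mosaic as a change to the underlying planar diagram, is either a topologically trivial rearrangement of arcs or a classical Reidemeister move; the injection $i$ clearly does not alter the diagram either. Hence $\mathbb{A}(n)$ acts through the knot type, and the forward direction follows.

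For the converse, I would appeal to Reidemeister's theorem to reduce the problem to showing (a) that every tame knot admits at least one mosaic presentation, and (b) that any planar isotopy or classical Reidemeister move relating two diagrams of the same tame knot can be mimicked by an element of $\mathbb{A}(n)$ after sufficiently enlarging $n$ via $i$. Part (a) is a rectilinear-approximation argument: given a regular projection of $k$, first perturb to a piecewise-linear diagram with axis-parallel edges and transverse crossings, then overlay a sufficiently fine square grid so that each cell contains at most one crossing, one bend, or one straight segment, with all arc endpoints at edge midpoints. Rescaling this grid to unit size produces an $n$-mosaic presentation of $k$.

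Part (b) is the heart of the argument and the main obstacle. Each classical Reidemeister move is supported in a bounded region of the plane, so after invoking $i$ enough times and using the planar isotopy moves in $\mathbb{A}(n+k)$ to slide the configuration into a standard position inside a small empty sub-grid, the local picture matches one of the enumerated mosaic Reidemeister moves. The bookkeeping is nontrivial: one must verify that the finite list of planar isotopy and Reidemeister mosaic moves really does suffice to implement every orientation, placement, and strand-sign configuration of each classical move, and one must argue that the sliding can always be carried out inside the grid without colliding with the rest of the diagram or the boundary — which is precisely what makes stabilization through $i$ essential. I would organize this by first establishing a short library of derived mosaic moves (for translating, rotating, and locally rearranging arcs) from the planar isotopy generators, and then reducing the various cases of each classical Reidemeister move to one of the listed mosaic templates. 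Combining this with (a) and Reidemeister's theorem yields the converse, completing the proof.
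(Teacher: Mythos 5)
The paper does not actually prove this proposition: it is the Lomonaco--Kauffman conjecture, and the authors simply quote it as a known result, citing Kuriya and Shehab \cite{KS} for the proof. So there is no in-paper argument to compare yours against; your proposal has to stand on its own, and as written it does not. The forward direction (mosaic moves induce planar isotopies or classical Reidemeister moves on the underlying diagram, so mosaic equivalence implies knot equivalence) is fine, and part (a) of your converse --- that every tame knot admits some mosaic presentation via a rectilinear approximation on a fine grid --- is also a standard and acceptable step.

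The gap is in part (b), which you yourself identify as ``the heart of the argument and the main obstacle'' and then do not carry out. The entire nontrivial content of the theorem is the claim that the specific finite list of $11+2+4+6$ mosaic moves, together with the injection $i$, suffices to realize every planar isotopy and every configuration of every classical Reidemeister move between suitably connected mosaics. Your proposal describes what ``one must verify'' --- building a library of derived translation/rotation moves, checking all orientations and placements of each Reidemeister template, showing the sliding never collides with the rest of the diagram or the boundary --- but verifies none of it. This is not a routine detail that can be deferred: a priori the listed generators might fail to produce some needed local rearrangement, and establishing that they do not fail is precisely the case analysis occupying the Kuriya--Shehab paper. As it stands your argument is a correct proof \emph{strategy} (indeed, essentially the strategy the literature follows), but the converse direction is asserted rather than proven. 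To repair it you would either need to execute the case analysis in part (b) or explicitly invoke the published result of \cite{KS}, in which case the proposition is a citation rather than something you have proved.
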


\begin{definition}
The \textit{mosaic number} of a knot (or a link) $K$, denoted by $m(K)$, is the smallest integer $n$ for which $K$ can be represented by a $n$-mosaic.
\end{definition} 

It is obvious that the mosaic number is an invariant for knots and links. For example, the mosaic number of $3_1$ is $4$ and it is easy to show this. In the papers \cite{OHLL, LLPP}, they calculated the mosaic number of knots up to $8$ crossings.

\section{\large\textbf{Marked Graph Mosaics}}\label{MGM}

Let $\mathbb{T}^{(u)}_{M}$ denote the set of $2$ Symbols, called \textit{mosaic (unoriented) tiles with markers}, as follows : 
\[\includegraphics{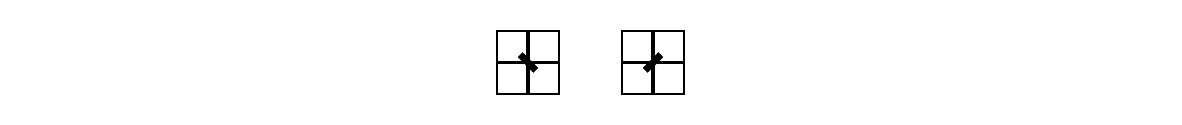}\]

Note that the two tiles are the same up to rotation and have $4$ connection points. 
For constructing an $n$-mosaic for marked graph diagrams, consider all tiles of $\mathbb{T}^{(u)}\cup \mathbb{T}^{(u)}_{M}$ as elementary tiles.
\[\includegraphics{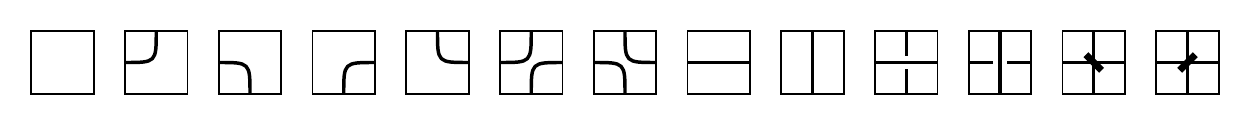}\]

Other definitions can be defined in a manner such as mosaic knots, for instance, connection points, contiguous, suitably connected. 
An \textit{$(m, n)$-mosaic} is an $m\times n$ matrix $M=(M_{ij})$ of tiles, with rows and columns indexed $0, 1, \cdots, m-1$ where each $(i,j)$-entry $M_{ij}$ is an element of $\mathbb{T}^{(u)}\cup \mathbb{T}^{(u)}_{M}$. The set of $(m, n)$-mosaics is denoted by $\mathbb{M}_{M}^{(m, n)}$. 
It $m=n$, then an $(n, n)$-mosaic is a \textit{$n$-mosaic} and its set is denoted by $\mathbb{M}_{M}^{(n)}$.

\begin{definition}
 A \textit{marked graph $(m, n)$-mosaic} is a $(m, n)$-mosaic in which all tiles are suitably connected. The set of all marked graph $(m, n)$-mosaic is the subset of $\mathbb{M}^{(m, n)}_{M}$, denoted by $\mathbb{K}^{(m, n)}_{M}$. 
If $m=n$, then it is called a \textit{marked graph $n$-mosaic} and its set is denoted by $\mathbb{K}^{(n)}_{M}$.
\end{definition}

\begin{example}
The marked graph diagrams $0_1$, $2_{1}^{1}$ and $6_{1}^{0,1}$ have the marked graph mosaics as follows. 
\[\scalebox{0.8}{\includegraphics{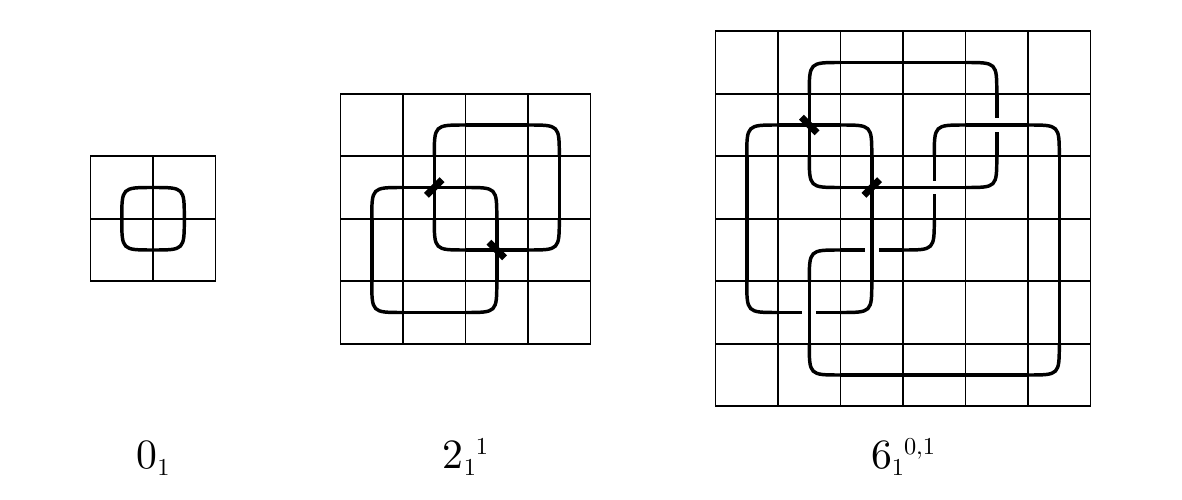}}\]
\end{example}

For the equivalence for marked graph mosaics, there are planar isotopy moves and Yoshikawa moves by using mosaic tiles in $\mathbb{T}^{(u)}\cup \mathbb{T}^{(u)}_{M}$. 
The mosaic moves for planar isotopy are the same $P_{1}, \cdots, P_{11}$ with knot mosaic moves and 4 additional moves $P_{8}', P_{9}', P_{10}', P_{11}'$ depicted as follows. 
\[\scalebox{0.8}{\includegraphics{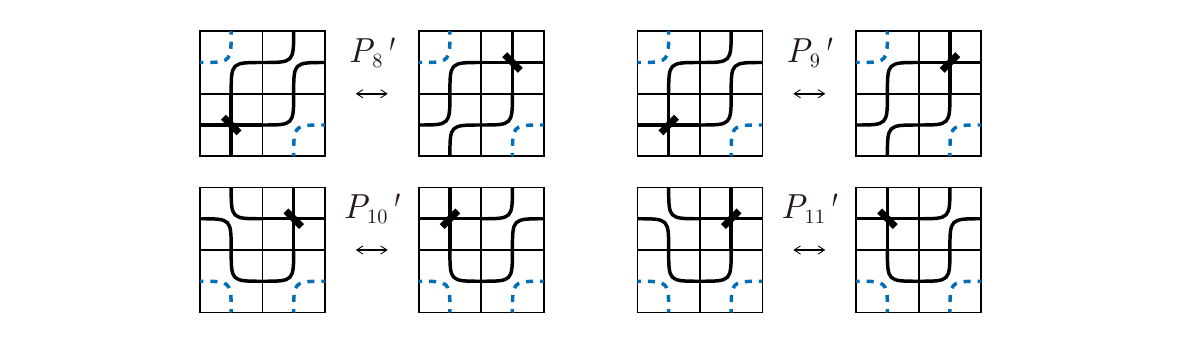}}\]
Yoshikawa moves $\Gamma_{1}, \Gamma_{2}, \Gamma_{3}$ are the same with Reidemeister moves I, II, III.
The mosaic moves for Yoshikawa moves $\Gamma_{4}, \cdots, \Gamma_{8}$ are as follows. 
\[\scalebox{0.8}{\includegraphics{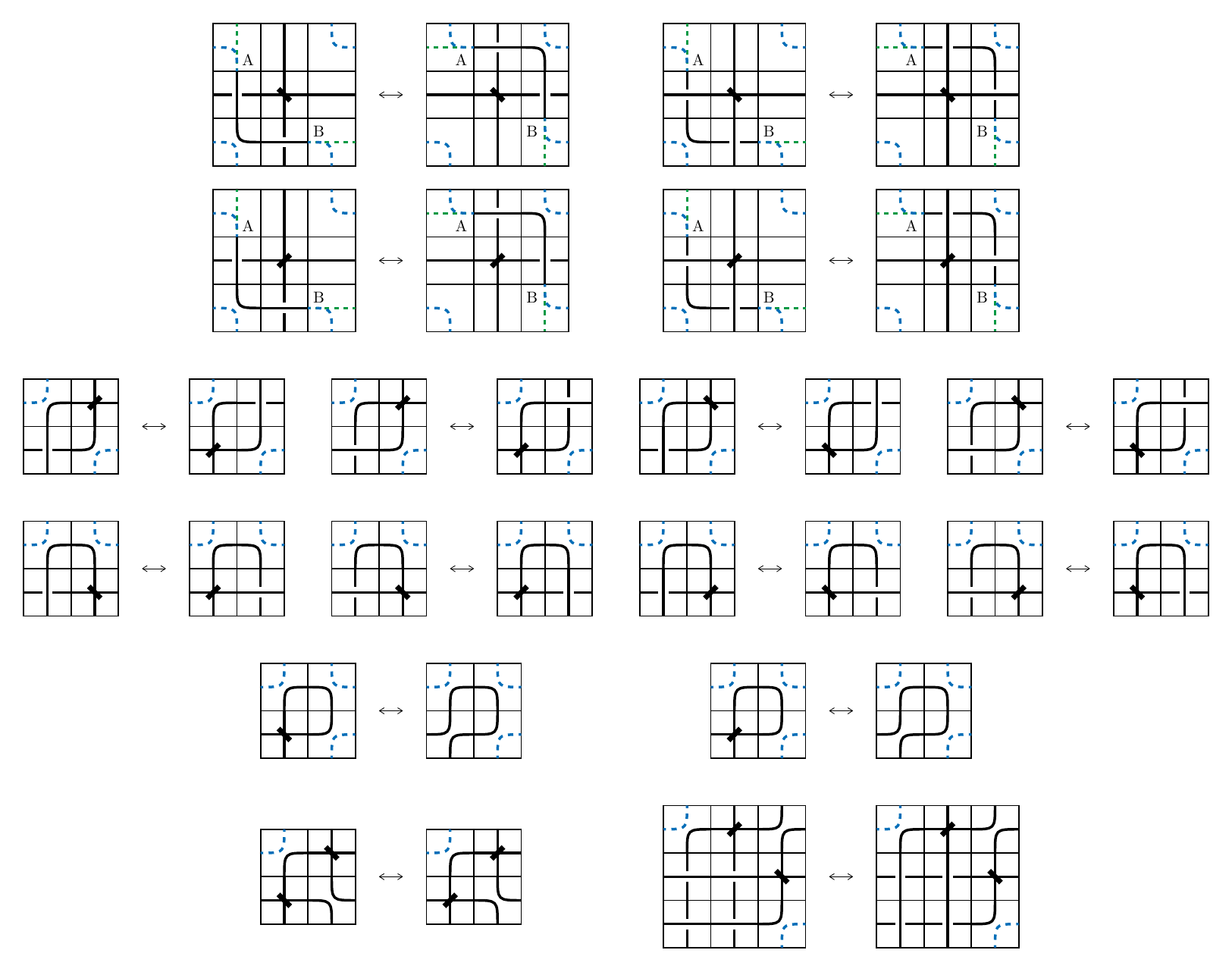}}\]


All marked graph mosaic moves are permutations on the set $\mathbb{M}^{(n)}_{M}$ of $n$-mosaics. Indeed, they are also in the group of all permutations of the set $\mathbb{K}^{(n)}_{M}$ of marked graph $n$-mosaics.

\begin{definition}
The \textit{ambient isotopy group $\mathbb{A}^{(n)}_{M}$} is the subgroup of the group of all permutations of the set $\mathbb{K}^{(n)}_{M}$ generated by all planar isotopy moves and all Yoshikawa moves.  
\end{definition}

Two marked graph $n$-mosaics $M$ and $M'$ are said to be \textit{of the same marked graph $n$-type}, denoted by 
  $M \overset{n}{\sim} M',$
  if there exists an element of $\mathbb{A}^{(n)}_{M}$ such that it transforms $M$ into $M'$.
Two marked graph $n$-mosaics $M$ and $M'$ are said to be \textit{of the same marked graph type} 
  if there exists a non-negative integer $k$ such that 
  $$i^{k}M \overset{n+k}{\sim} i^{k}M',$$
  where $i : \mathbb{M}^{(j)} \rightarrow \mathbb{M}^{(j+1)}$ is the mosaic injection by adding a row and a column consisting of only empty tiles.
Therefore, we can obtain the following result.  
  
\begin{theorem}
Let $M$ and $M'$ be two marked graph mosaics of two marked graphs $K$ and $K'$, respectively. 
Then $M$ and $M'$ are of the same marked graph mosaic type if and only if $K$ and $K'$ are equivalent.
\end{theorem}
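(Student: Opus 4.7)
The proof follows the pattern of the Kuriya--Shehab theorem for classical knot mosaics, adapted to include the marked vertex tiles and the new Yoshikawa moves $\Gamma_4,\ldots,\Gamma_8$.

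The plan is to prove the two implications separately. For the ($\Rightarrow$) direction, note that every generator of $\mathbb{A}^{(n)}_M$ was defined precisely so that the underlying marked graph diagram before and after the move differs by either a planar isotopy in $\mathbb{R}^2$ or one of the Yoshikawa moves $\Gamma_1,\ldots,\Gamma_8$. Likewise, the mosaic injection $i$ does not change the underlying diagram. Hence if $i^k M \overset{n+k}{\sim} i^k M'$, then the corresponding marked graph diagrams are related by a finite sequence of planar isotopies and Yoshikawa moves, so by the Kearton--Kurlin/Swenton/Yoshikawa proposition cited above, the marked graphs $K$ and $K'$ are equivalent.

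For the ($\Leftarrow$) direction, the plan has two stages. First, given any marked graph diagram $K$, I would show that for some sufficiently large $n$, $K$ admits a representation as a marked graph $n$-mosaic; this is done by rescaling the diagram so that every crossing, marked vertex and strand transition lies in its own unit square and the induced pattern of tiles is suitably connected. Second, given equivalent $K$ and $K'$ with a finite sequence of planar isotopies and Yoshikawa moves $\Gamma_1,\ldots,\Gamma_8$ relating their diagrams, I would show that each single step in this sequence can be realized, after applying $i$ enough times to enlarge the mosaic and after applying the mosaic planar isotopy moves $P_1,\ldots,P_{11},P_8',P_9',P_{10}',P_{11}'$ to move the relevant local configuration into a standard position, by a single application of a mosaic version of the corresponding move. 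The moves $\Gamma_1,\Gamma_2,\Gamma_3$ reduce to the classical Reidemeister mosaic moves, and the moves $\Gamma_4,\ldots,\Gamma_8$ are handled by the newly introduced mosaic moves depicted in the figure preceding the theorem. Composing these over the finite sequence gives the desired mosaic equivalence $i^k M \overset{n+k}{\sim} i^k M'$.

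The main obstacle, and the step I would spend the most care on, is stage two's treatment of \emph{arbitrary} planar isotopies of the marked graph diagram. Unlike a local Yoshikawa move, a planar isotopy can translate strands and vertices globally through the diagram, while mosaic planar isotopy moves $P_1,\ldots,P_{11},P_8',\ldots,P_{11}'$ act only on $2\times 2$ or $3\times 3$ blocks of tiles. The argument I would give, following Kuriya--Shehab, is that after enlarging the mosaic via enough applications of $i$ there is always enough empty tile space to push any strand, crossing, or marked vertex one tile at a time using the local mosaic planar isotopy moves; the four additional moves $P_8',P_9',P_{10}',P_{11}'$ are exactly what is needed to slide a marked vertex past an arc or crossing of the appropriate local form. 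Verifying that the enumerated mosaic planar isotopy moves suffice to realize every such one-tile translation, and in particular that marked vertices can be transported freely, is the technical heart of the proof; everything else is a routine adaptation of the classical mosaic argument.
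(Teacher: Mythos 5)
Your outline is essentially the argument the paper intends: the paper itself gives no proof of this theorem at all, simply asserting it after the definitions with the phrase ``Therefore, we can obtain the following result,'' implicitly relying on the Kuriya--Shehab theorem for classical mosaics together with the new tiles and moves. Your two-directional decomposition, the reduction of the forward direction to the Kearton--Kurlin/Swenton/Yoshikawa proposition, and the identification of the real technical content --- showing that the local mosaic planar isotopy moves $P_1,\ldots,P_{11},P_8',\ldots,P_{11}'$ suffice to realize arbitrary planar isotopies, in particular to transport marked vertices tile by tile --- are all correct and in fact more explicit than anything the paper provides. The one caveat is that your proposal, like the paper, defers rather than carries out that key verification; a complete proof would need to exhibit the one-tile translation of a marked vertex as an explicit composition of the listed moves (or observe that a marked vertex tile has the same four connection points as a crossing tile, so the classical transport argument applies verbatim once the primed moves are available).
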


For oriented surface-links, consider original oriented mosaic tiles in $\mathbb{T}^{(o)}$ (see in \cite{LK}) and add 4 oriented mosaic tiles with markers as follows. 
Then we can deal with oriented marked graph mosaics similar to oriented knot mosaics. 
\[\includegraphics{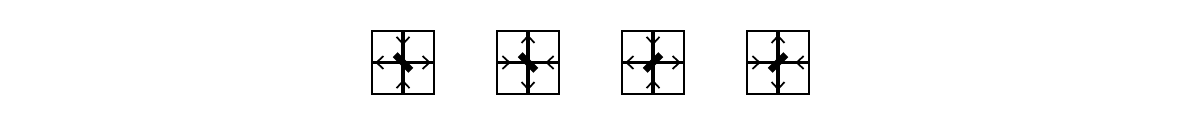}\]
The definition of suitably connected when an orientation is given also considers only cases where the orientation is well matched.
Therefore, the oriented marked graph mosaics can also follow the same flow.

\section{Mosaic numbers}

The marked graph diagram $8_1$ can reduce the size of its marked graph mosaic using mosaic moves. 
\[\scalebox{0.6}{\includegraphics{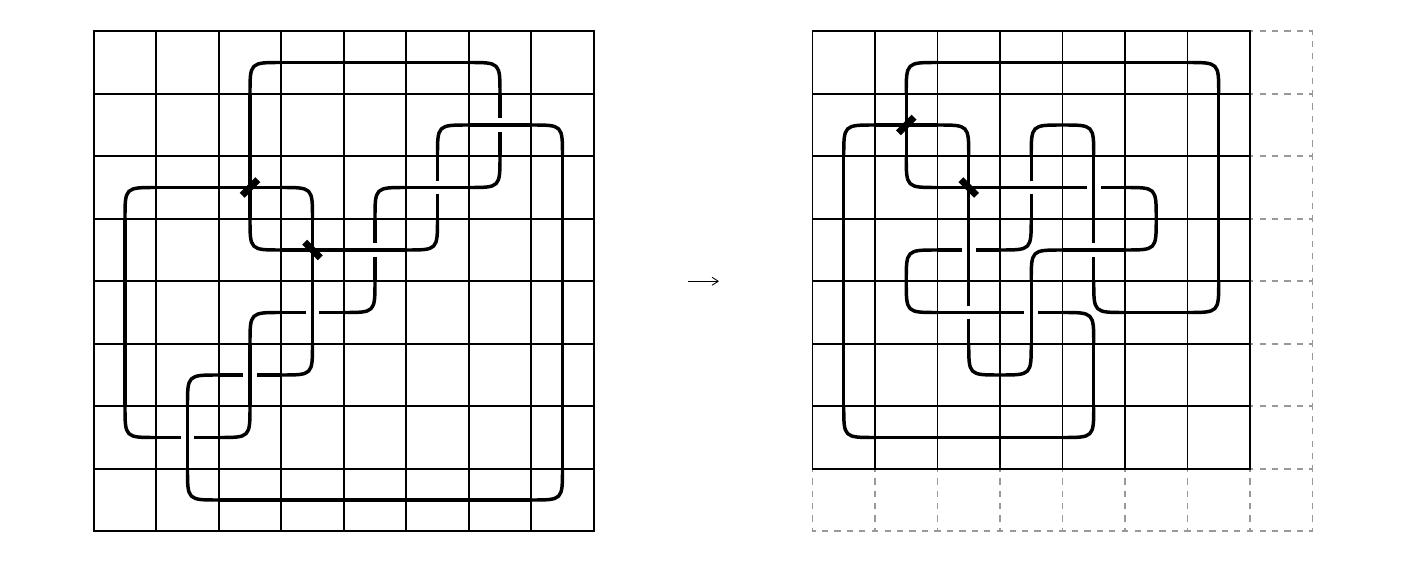}}\]

\begin{definition}
The \textit{mosaic number} of a marked graph diagram $K$, denoted by $m(K)$, is the smallest integer $n$ for which $K$ can be represented by a marked graph $n$-mosaic.
\end{definition} 

It is obvious that the smallest number of the mosaic size of a marked graph diagram is an invariant for surface-links. 

\begin{theorem}
The mosaic number of a marked graph diagram is an invariant for surface-links. 
\end{theorem}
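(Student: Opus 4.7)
The plan is to reduce the claim to the previous Theorem, which identifies marked graph mosaic equivalence with marked graph equivalence, combined with the Proposition characterizing surface-link equivalence in terms of Yoshikawa moves on marked graph diagrams. The content of the theorem is that the quantity
\[
m(F) := \min\{m(K) : K \text{ is a marked graph diagram of } F\}
\]
is well-defined; that is, it depends only on the equivalence class of the surface-link $F$ and not on the particular choice of marked graph diagram used to present it.

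The argument would proceed as follows. Suppose $F$ and $F'$ are equivalent surface-links, and let $K$, $K'$ be marked graph diagrams of $F$ and $F'$ respectively. By the Proposition above, $K$ and $K'$ are related by a finite sequence of planar ambient isotopies in $\mathbb{R}^2$ and Yoshikawa moves $\Gamma_1, \ldots, \Gamma_8$. By the previous Theorem, any marked graph $n$-mosaic representing $K$ then lies in the same marked graph mosaic type as some marked graph mosaic representing $K'$, possibly after passing to a larger ambient mosaic via the injection $i^k$. Consequently, the collection of integers achievable as $m(K)$ for marked graph diagrams $K$ of $F$ coincides with the corresponding collection for $F'$, and therefore their minima agree, giving $m(F) = m(F')$.

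The main, though mild, obstacle is to confirm that every marked graph diagram $K$ admits some marked graph $n$-mosaic, so that the minimum defining $m(F)$ is taken over a nonempty set and is finite. This follows by a routine discretization: after a suitable planar isotopy, each crossing and each marked vertex of $K$ can be isolated within its own small rectangle of a sufficiently fine grid, and each grid cell then replaced by the appropriate tile from $\mathbb{T}^{(u)} \cup \mathbb{T}^{(u)}_{M}$, producing a marked graph mosaic realizing $K$. Once existence is secured, the invariance assertion becomes an immediate corollary of the marked graph mosaic equivalence theorem established just above, so no further work is required.
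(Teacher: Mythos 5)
Your proposal is correct. The paper offers no proof of this theorem at all---it simply declares the invariance ``obvious'' immediately before stating it---so your argument is a reasonable filling-in of what the authors left implicit: invariance follows because equivalent surface-links admit exactly the same set of marked graph diagrams (via the Yoshikawa-move Proposition), and the minimum is taken over a nonempty set once one checks, as you do, that every marked graph diagram can be discretized into a suitably connected mosaic. If anything, your detour through the marked-graph-mosaic-type theorem is more than is needed, since the mosaic-type equivalence involves the size-increasing injection $i^k$ and so does not itself control mosaic size; the essential point is just that $m(F)$ is defined as a minimum over all diagrams of $F$, a set depending only on the equivalence class of $F$.
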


It is obvious that the mosaic number of the standard sphere $0_1$ is $2$ and the mosaic numbers of both $2_{1}^{1}$ and $2_{1}^{-1}$ are $4$.

For finding the mosaic numbers, one can use twofold rule, introduced in \cite{OHLL}. 
For a given $(m, n)$-mosaic $D$, 
since there are exactly two ways to connect adjacent connection points in the boundary of $D$,
one can obtain exactly two marked graph $(m+2, n+2)$-mosaics $\widehat{D}^{1}$ and $\widehat{D}^{2}$,
where $D$ is suitably connected except the connection point of its boundary. 
The entry tiles of $D$ are called {\it inner tiles} of $\widehat{D}^{1}$ or $\widehat{D}^{2}$.
It is obvious that a crossing and a marked vertex must be located in the position of inner tiles for the suitably connected condition. 

\[\scalebox{0.8}{\includegraphics{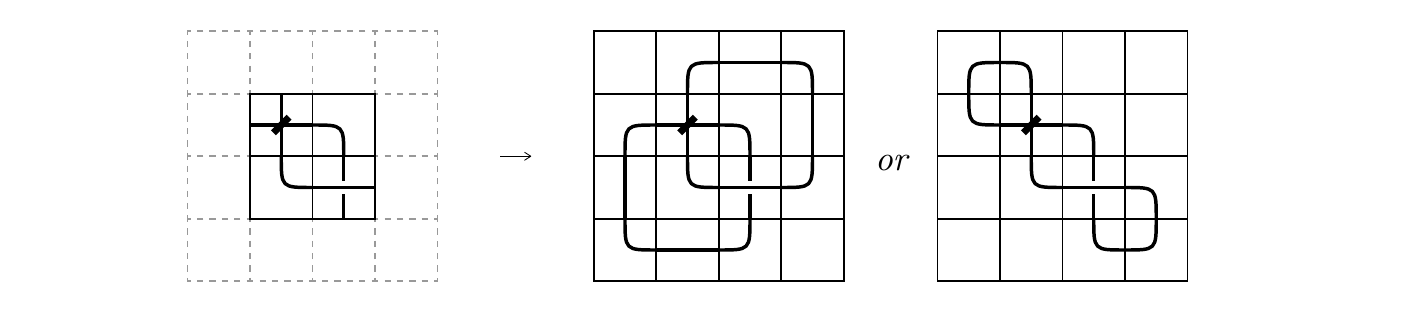}}\]

It is clear that if one of four inner corners has a crossing or a marked vertex and if one of two mosaics by the twofold rule makes a kink, then the crossing or the marked vertex can be removed by $\Gamma_{1}$ or $\Gamma_{6}, \Gamma_{6}'$, respectively.

\[\scalebox{0.8}{\includegraphics{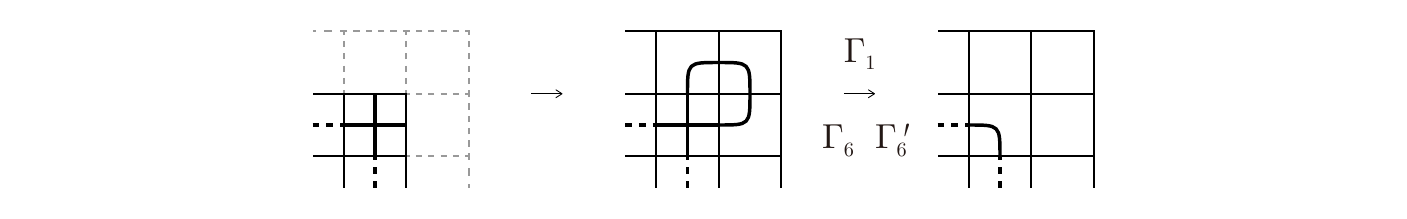}}\]

\begin{theorem}\label{Mnbr6}
Let $K$ be a marked graph $K$. 
If $\mathrm{ch}(K)\ge 7$, then $m(K)\geq6$ where $\mathrm{ch}(K)$ denotes the ch-index of $K$. 
\end{theorem}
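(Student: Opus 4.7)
The plan is to prove the contrapositive: if $m(K) \leq 5$ then $K$ is equivalent to a marked graph diagram of ch-index at most $6$. First, using the mosaic injection $i$, which pads a smaller mosaic with a row and column of blank tiles, I may assume without loss of generality that $K$ is presented by a marked graph $5$-mosaic $\widehat{D}$. Since each crossing tile and each marked-vertex tile has four connection points, none of them can occupy any of the $16$ outer-boundary positions in row or column $0$ or $4$: otherwise at least one connection point would lie on the outer edge of the mosaic with no matching neighbor, violating suitable connectedness. Consequently the crossings and marked vertices of $\widehat{D}$ lie among the nine inner positions $(i,j)$ with $1 \leq i, j \leq 3$.

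Next I would invoke the twofold-rule and kink analysis established in the paragraphs immediately preceding the theorem. Let $D$ denote the inner $3 \times 3$ block of $\widehat{D}$. For each of the four inner corners $(1,1), (1,3), (3,1), (3,3)$ of $D$ that contains a crossing or a marked vertex, one of the two outer-ring completions $\widehat{D}^{1}, \widehat{D}^{2}$ of $D$ afforded by the twofold rule creates a kink around that corner tile. Passing to that completion and then applying the Reidemeister-type move $\Gamma_{1}$ (for a crossing) or the Yoshikawa move $\Gamma_{6}$ or $\Gamma_{6}'$ (for a marked vertex) eliminates that $4$-valent tile, producing an equivalent $5$-mosaic of strictly smaller ch-index. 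Iterating, we reach an equivalent presentation of $K$ as a $5$-mosaic in which no inner corner of $D$ holds a crossing or marked vertex. The surviving $4$-valent tiles then lie only at the four inner edge positions $(1,2), (2,1), (2,3), (3,2)$ and the center $(2,2)$, a total of at most five positions, so the ch-index of this equivalent representation is at most $5$, hence certainly at most $6$.

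The hard part will be justifying the corner-reduction step rigorously. One must verify that switching between the two twofold-rule completions $\widehat{D}^{1}$ and $\widehat{D}^{2}$ is realized by a sequence of mosaic moves (so that the underlying marked graph $K$ is genuinely unchanged when one completion is replaced by the other), and that the four corner reductions can be iterated without one reduction obstructing another — for instance, by first disposing of the two diagonally-opposite corners most easily, then arguing that the remaining two corner cases cannot simultaneously avoid producing a kink under the twofold rule. Both issues are expected to follow from careful bookkeeping in the planar-isotopy together with the Yoshikawa mosaic calculus $\Gamma_{1}, \ldots, \Gamma_{8}$, using precisely the corner-kink picture depicted just before the theorem.
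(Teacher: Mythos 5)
There is a genuine gap, and it is the step you yourself flag as ``the hard part'': \emph{passing to} whichever twofold-rule completion creates the kink. The two mosaics $\widehat{D}^{1}$ and $\widehat{D}^{2}$ are two different closures of the same inner $3\times 3$ tangle, and in general they present \emph{different} marked graphs (exactly as the two closures of a classical tangle are different links). The diagram you are given is one fixed completion; no sequence of planar isotopy or Yoshikawa mosaic moves lets you trade it for the other, so when the actual diagram is the completion \emph{without} the kink at a given corner, that corner tile cannot be removed. A decisive test case lives inside the paper itself: the trefoil has a $4$-mosaic whose three crossings all sit at corners of the $2\times 2$ inner block, and for each of them one of the two completions makes a kink; your corner-clearing principle would therefore unknot the trefoil. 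The same objection kills the $5$-mosaic version, and a symptom of the problem is that your argument never uses the hypothesis $\mathrm{ch}(K)\ge 7$ at all --- it would show that \emph{every} marked graph on a $5$-mosaic has ch-index at most $5$, contradicting, e.g., the existence of $6$-crossing knots with mosaic number $5$.

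The paper's proof is built precisely to avoid this trap: it never chooses a completion, but instead argues that in the configurations forced by the hypothesis, a kink is present \emph{no matter which} completion the given diagram is. Counting disposes of $\mathrm{ch}(K)\ge 10$; for $\mathrm{ch}(K)=8$, having $8$ four-valent tiles among the $9$ inner positions forces an entire boundary row or column of the inner block to consist of crossings/marked vertices, and for such a row one completion kinks its left end while the other kinks its right end, so a $\Gamma_1$, $\Gamma_6$, or $\Gamma_6'$ reduction is always available, contradicting minimality of the ch-index. For $\mathrm{ch}(K)=7$ there are $36$ placements of the two non-four-valent inner tiles; $34$ contain such a full row, and the $2$ exceptional placements (with two empty opposite corners) are settled by a separate ad hoc argument: in each subcase one completion has a kink and the other is an explicit diagram with a single component, which cannot present the two-component surface-link $7_1^{0,-2}$. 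Your proposal is missing both the ``both completions contain a kink'' observation and this exceptional-case analysis, and the completion-switching it substitutes for them is not a valid move.
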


\begin{proof}
Let $K$ be a marked graph whose ch-index is greater than or equal to $7$.
If $\mathrm{ch}(K)\geq 10$, then $m(K)\geq 6$ because the number of inner tiles of a $5$-mosaic diagram is $9$.
Similarly, it is easy to check that $m(K)\geq 5$ if $\mathrm{ch}(K)\geq 7$.

In the case that $\mathrm{ch}(K)=8$, we will show that $m(K)\ne 5$.
Suppose that $m(K)=5$, that is, there is a marked graph $5$-mosaic diagram $D$ of $K$ such that the ch-index of $D$ is $8$.
Since the number of inner tiles of $D$ is $9$, there are $9$ types for inner tiles.
All cases have at least $1$ row in the boundary of inner tiles, whose all mosaic tiles are crossings or marked vertices, as follows up to rotation.

\[\scalebox{0.8}{\includegraphics{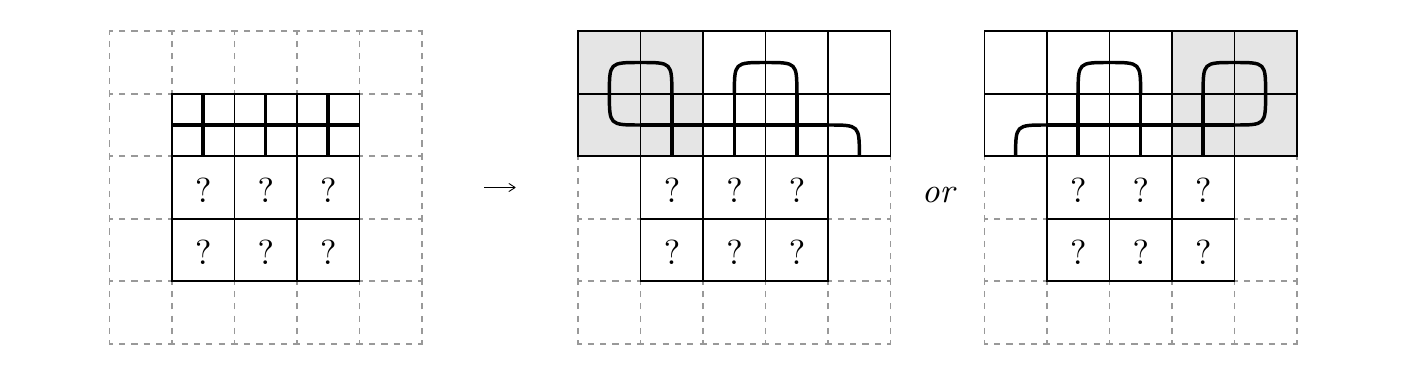}}\]

By applying the twofold rule, the resulting mosaics have always at least one kink. Therefore, one can remove the corresponding crossing or marked vertex. 
It contradicts that the ch-index is $8$. 
Hence, $m(K)\geq 6$.

Similar that $\mathrm{ch}(K)=7$, suppose that $m(K)=5$.
Let $D$ be a marked graph $5$-mosaic diagram of $K$ with ch-index $7$.
Then there are $36$ cases of its inner tiles and they have at least $1$ row as depicted above except $2$ cases. By applying the same argument of the case of $\mathrm{ch}(K)=8$, $34$ cases are contradictory. 
In the remaining $2$ cases, both have exactly two corners with no crossings and no marked vertices. Then for each cases, there are $4$ subcases as follows.
\[\scalebox{0.8}{\includegraphics{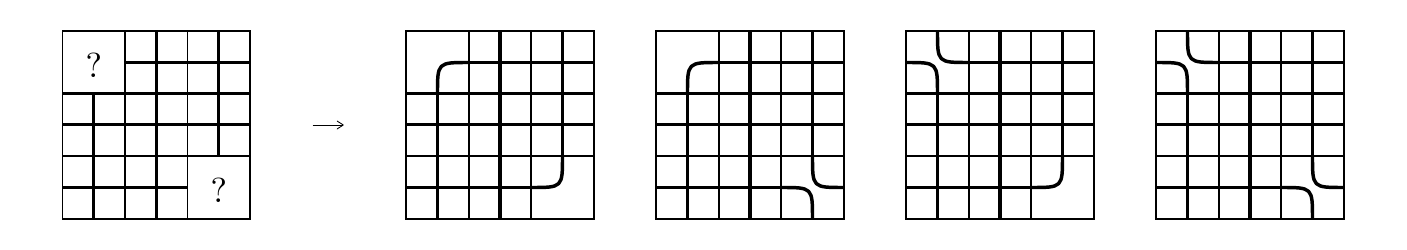}}\]
By the twofold rule, for each subcase, there two marked graph mosaics; one of them has always at least one kink. Since we can reduce the ch-index of $D$, it contradicts that the ch-index is $7$ and then $m(K)\geq 6$.
\[\scalebox{0.5}{\includegraphics{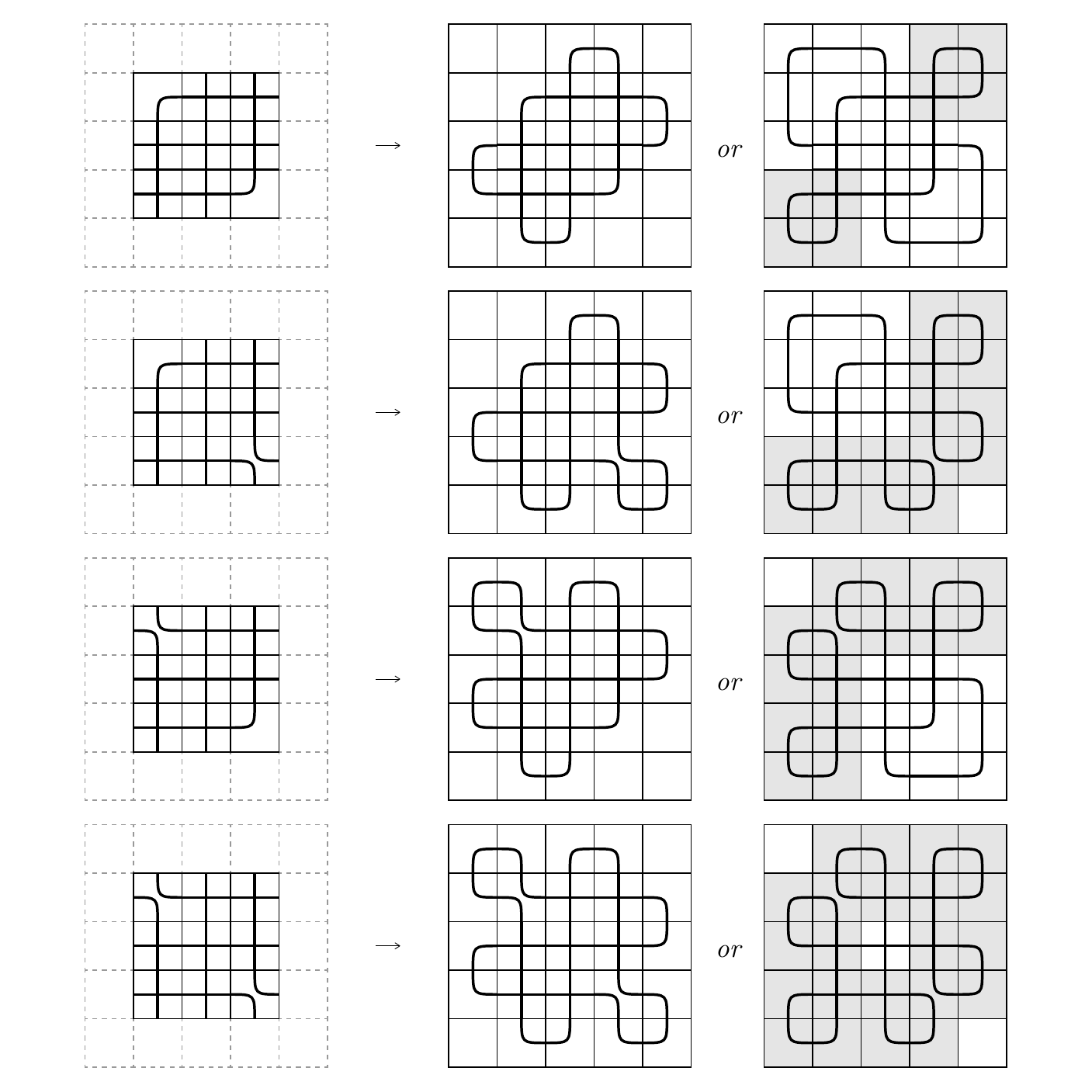}}\]
The remaining diagrams of $4$ subcase are the same shown as follows. 
\[\scalebox{0.8}{\includegraphics{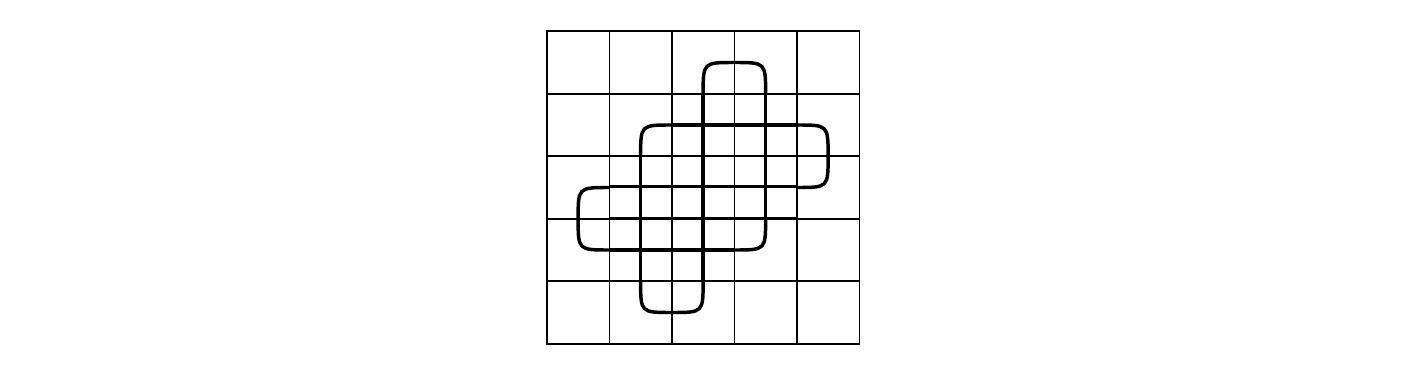}}\]
It has exactly one component. It contradicts that the number of components of $7_{1}^{0,-2}$ has two components. Hence, $m(K)\geq 6$.
\end{proof}

The following diagrams are marked graph mosaics of surface-links with ch-index $\leq$ 10. The size of some mosaic diagrams are $6$ as follows. By Theorem \ref{Mnbr6}, we know that their mosaic numbers are exactly $6$.
\[\scalebox{0.5}{\includegraphics{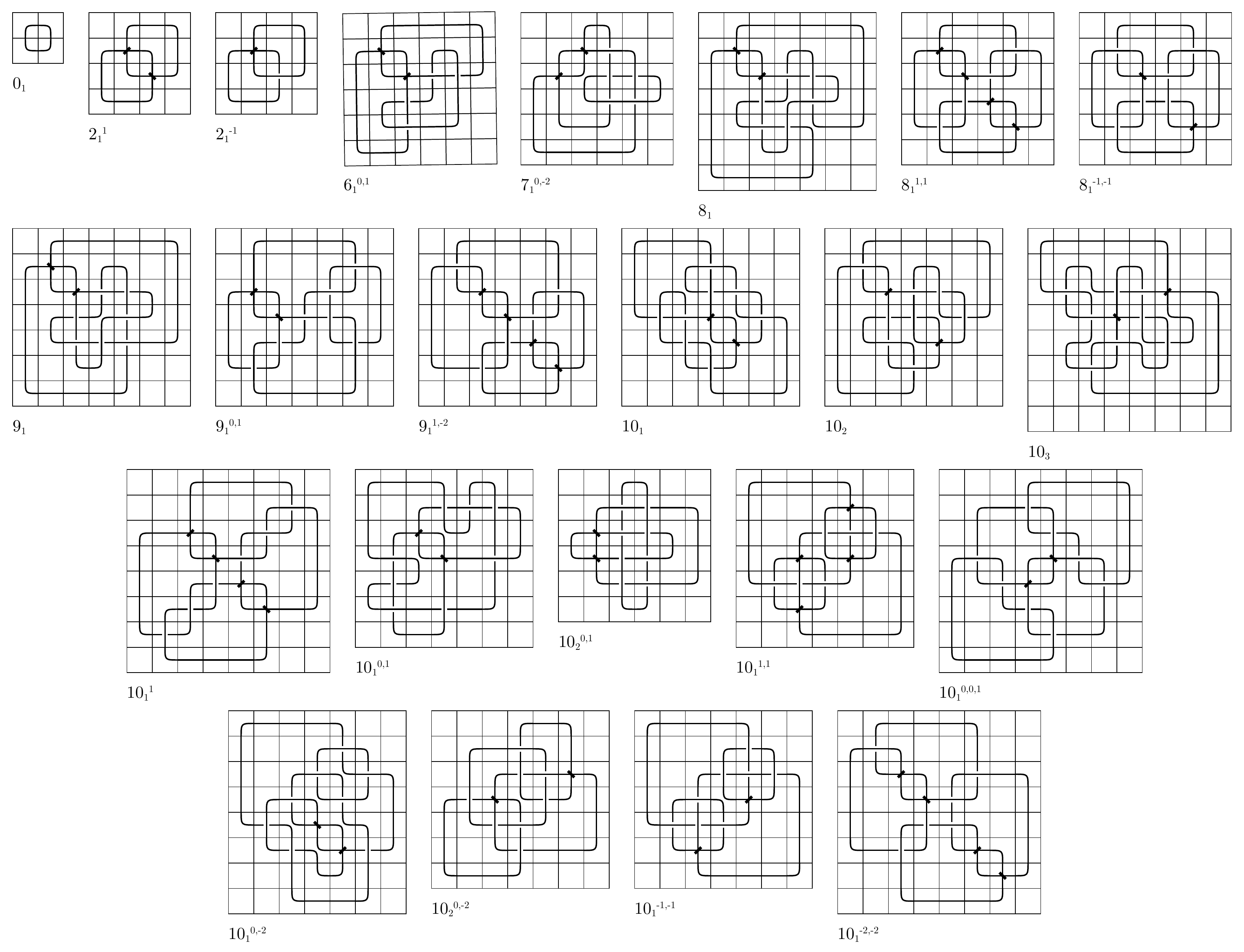}}\]


We conclude this section with a table of mosaic numbers for surface-links
of small ch-index.

\begin{center}
  \begin{tabular}[ ]{| c | c | }
    \hline 
    $K$ & $m(K)$   \\ 
    \hline \hline
    $0_{1}$                                     & $2$  \\ \hline 
    $2_{1}^{1}$, $2_{1}^{-1}$         & $4$ \\ \hline
    $6_{1}^{0,1}$                           & $5, 6$ \\ \hline 
    $7_{1}^{0,-2}$, $8_{1}^{1,1}$, $8_{1}^{-1,-1}$, $10_{2}^{0,1}$ & $6$ \\ \hline
    $8_{1}$, $9_{1}$, $9_{1}^{0,1}$, $9_{1}^{1,-2}$, $10_{1}$, $10_{2}$,  $10_{1}^{0,1}$, $10_{1}^{1,1}$, $10_{2}^{0,-2}$, $10_{1}^{-1,-1}$ & $6, 7$ \\ \hline
    $10_{3}$, $10_{1}^{1}$,  $10_{1}^{0,0,1}$, $10_{1}^{0,-2}$, $10_{1}^{-2,-2}$ & $6, 7, 8$\\ \hline
  \end{tabular}
\end{center}

\section{\large\textbf{Kei-Colored Mosaic Diagrams}}\label{K}

Recall that a \textit{kei} is a set $X$ with a binary operation $\tr$
satisfying the axioms
\begin{itemize}
\item[(i)] For all $x\in X$, $x\tr x=x$,
\item[(ii)] For all $x,y\in X$, we have $(x\tr y)\tr y=x$, and 
\item[(iii)] For all $x,y,z\in X$ we have $(x\tr y)\tr z=(x\tr z)\tr (y\tr z)$.
\end{itemize}
A map $f:X\to X'$ between kei is a \textit{kei homomorphism} if it satisfies
\[f(x\tr y)=f(x)\tr f(y)\]
for all $x,y\in X$. Kei are also called \textit{involutory quandles}; see 
\cite{EN} for more.

\begin{example}
Every group is a kei under the operation $x\ast y=yx^{-1}y$, called the 
\textit{core kei} of the group.
\end{example}

Every surface-link $L$ (including classical knots and links, which can be
regarded as trivial cobordisms) has a \textit{fundamental kei} $\mathcal{K}(L)$
whose presentation can be read from a diagram of the surface-link. More 
precisely, the fundamental kei of a surface-link has generators corresponding 
to \textit{sheets}, i.e., connected components of a marked graph diagram 
representing $L$ where we divide at classical undercrossings, together with 
relations at the crossings as shown (suggestively as mosaic tiles)
\[\includegraphics{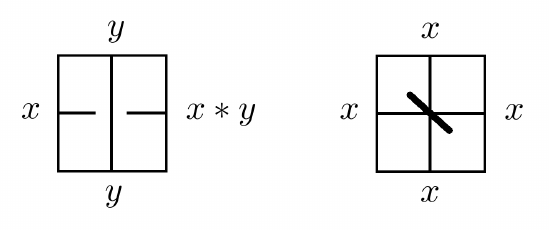}\]
The elements of the fundamental kei are then equivalence classes of kei
words in these generators modulo the equivalence relation generated by
the crossing relations and the kei axioms. The isomorphism class of the 
fundamental kei is a well-known invariant of unoriented surface-links.
 
Given a finite kei $X$, an assignment of elements of $X$ to the 
sheets of an oriented marked graph diagram (i.e., segments ending at 
undercrossing points or marked vertices) is a \textit{kei coloring} (also 
called an \textit{$X$-coloring}) of the diagram if it satisfies the crossing
condition pictured above at every crossing.

An $X$-coloring of a diagram $D$ of a surface-link $L$ defines and is defined 
by a unique element of the set of kei homomorphisms 
$\mathrm{Hom}(\mathcal{K}(L),X)$. This homset is an invariant of surface-links
for every finite kei $X$, from which useful computable invariants can be
extracted. The simplest example is the cardinality of the set, known as
the \textit{kei counting invariant}, denoted 
$\Phi_X^{\mathbb{Z}}(L)=|\mathrm{Hom}(\mathcal{K}(L),X)|$.

Generally speaking, any invariant of kei-colored diagrams (or equivalently,
homset elements) yields an invariant known as an \textit{enhancement} of the
kei counting invariant. Examples include the celebrated \textit{cocyle 
invariants} studied in \cite{CJKLS} and the more recent \textit{kei module 
invariants} introduced in \cite{JN}.

We will use mosaic diagrams to enhance the kei counting invariant in the 
following way. Let $L$ be a surface-link with mosaic diagram $D$ and let
$X$ be a finite kei. Assigning elements of $X$ (called ``kei colors'') 
to each of the arcs on the tiles in $D$ such that the colors match
at connection points and satisfy the kei coloring conditions at the 
crossings and marked vertices, we obtain an \textit{$X$-colored mosaic diagram}.
If we let $f\in\mathrm{Hom}(\mathcal{K}(L),X)$ be the homset element
represented by this coloring, we may denote the colored diagram by $D_f$.

\begin{definition}\label{def:k}
Let $L$ be a surface-link represented by a marked graph diagram $D$ 
and let $X$ be a finite kei. For each kei coloring
$f\in\mathrm{Hom}(\mathcal{K}(L),X)$ let us define the \textit{kei deficiency} 
of $D_f$ as the difference between the cardinality of the image
subkei of $f$ and the number of kei colors appearing in $D_f$. 
Let $\phi_f$ be the minimal kei deficiency over the set of minimal mosaic 
number diagrams $D_f$ representing $f$. Then the multiset 
\[\Phi_X^{\mathrm{Mos},M}(L)=\{\phi_f\ |\ f\in\mathrm{Hom}(\mathcal{K}(L),X)\}\]
is the \textit{mosaic deficiency enhancement multiset} of the kei homset 
invariant.
For ease of comparison we may also convert this to polynomial form by summing
over the multiset terms of the form $u^{\phi_f}$ to define the 
\textit{mosaic deficiency enhancement polynomial} 
\[\Phi_X^{\mathrm{Mos}}(L)=\sum_{f\in\mathrm{Hom}(\mathcal{K}(L),X)} u^{\phi_f}.\]
\end{definition}

Since there may be many distinct equivalent diagrams of $L$ with minimal 
mosaic number, to get an invariant we take for each coloring the minimal kei 
deficiency over the (finite) set
of all diagrams of $L$ with minimal mosaic number. Then by construction, the 
multiset of $\phi_f$-values forms an 
invariant of surface-links. From a given minimal-mosaic number diagram of $L$
we can obtain an upper bound on each of the $\phi_f$-values; to compute the 
invariant in general requires finding the complete set of minimal-mosaic number
diagrams of $L$, which can be computationally difficult. 

Let us order the set of polynomials with nonnegative integer coefficients  lexicographically by exponent. That is, to compare two polynomials we first compare their constant terms and in case of a tie, we use the linear term as a tiebreaker; if the constant and linear terms are equal, we use the quadratic term as a tiebreaker etc.
Then finding a new diagram which reduces the deficiency moves a coloring 
representative from a higher exponent into a lower exponent, yielding 
a smaller lexicographical position; hence it follows that any particular diagram yields
an upper bound on the invariant. 

To prove tightness of this
bound, one can check exhaustively (which we have not done in the Example below) that all other
mosaic diagrams with the same or lesser mosaic number of the link or surface-link in question have the same deficiencies for their colorings
representing the nontrivial homset elements.

\begin{remark}\label{rem:1}
We observe that we can similarly define deficiency enhancements using
crossing number or ch-index in place of mosaic number. Generally speaking,
on any diagram with nonzero deficiency we can perform Reidemeister II moves
to reveal ``missing'' colors in the image subkei. Since these moves increase 
ch-index without changing the mosaic number, we expect that these should be 
different invariants. 
\end{remark}

\begin{example}
Consider the surface-knot $10_1$ and the kei $\mathrm{Core}(\mathbb{Z}_5)$.
Our \texttt{python} computations show that $10_1$ has 25 colorings by the 
kei $\mathrm{Core}(\mathbb{Z}_5)$. These include five monochromatic colorings
which have deficiency zero and 20 nontrivial colorings, each of which is 
surjective with deficiency $1$ on this diagram, e.g.
\[\includegraphics{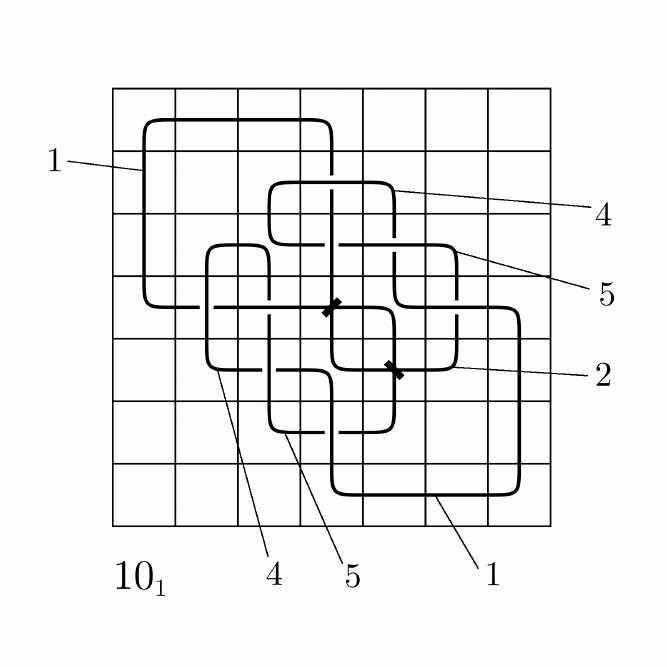}.\]
Then from this diagram we obtain an upper bound $5+20u$ on the kei deficiency 
polynomial.
\end{example}

We end this section by defining another easy-to-define but difficult-to-compute
invariant us surface-links using mosaics and kei. 

\begin{definition}
Let $L$ be a surface-link and $X$ a finite kei. For each 
$f\in\mathrm{Hom}(\mathcal{K}(L),X)$ and each positive integer $n\ge 2$,
let $\rho_f^n$ be the minimal kei deficiency value over all $n$-mosaic 
diagrams of $L$.
Then the sequence $\{\rho_f^n\}_{n=2}^{\infty}$ is the \textit{kei deficiency 
spectrum} for $f$, and as before we have an invariant multiset of such
spectra.
\end{definition}

\begin{remark}
We note that since classical knots can be understood as surface-links with an
empty set of marked vertices (i.e., trivial cobordisms between two copies
of the knot), the invariants defined in this section are also invariants
of classical knots and links.
\end{remark}

\section{\large\textbf{Questions}}\label{Q}

There remains much to be done on the topic of mosaic surface-links. Finding
efficient ways to prove tightness of bounds is of interest, as is
extending the quantum knot constructions in \cite{LK}.

Say a surface-link $L$ is \textit{$X$-deficiency heterogeneous} if it has at 
least two homset elements which require different minimal-mosaic number 
diagrams to realize their minimal $X$-deficiencies. Is there any such 
surface-link? For a given kei $X$, which is the smallest $ch$-index of
any link which is $X$-deficiency heterogeneous? For a fixed surface-link
$L$, for which finite kei $X$, if any, is $L$ $X$-deficiency heterogeneous?

A question raised by Seiichi Kamada at a talk on this topic while this
paper was in preparation is whether the ordering of surface-links
by ch-number agrees with that induced by mosaic number -- e.g.,
does there exist a surface-link whose minimal ch-diagram has greater
mosaic number than its minimal mosaic diagram. As mentioned in Remark 
\ref{rem:1}, since there are moves which change the ch-index without changing 
the mosaic number, it is not clear what is the relationship between these two
notations of complexity of surface-links.

\bibliographystyle{abbrv}
\bibliography{sc-sn}{}

\bigskip

\noindent
\textsc{Nonlinear Dynamics and Mathematical Application Center \\
Kyungpook National University \\
Daegu, 41566, Republic of Korea} 

\bigskip

\noindent
\textsc{Department of Mathematical Sciences \\
Claremont McKenna College \\
850 Columbia Ave. \\
Claremont, CA 91711 USA}

\end{document}